\newtheorem{Theorem}{Theorem}
\newtheorem{Lemma}[Theorem]{Lemma}
\newtheorem{Corollary}[Theorem]{Corollary}
\newtheorem{Proposition}[Theorem]{Proposition}
\newtheorem{Observation}[Theorem]{Observation}
\newcommand{\eps}{\varepsilon}
\newcommand\al{\alpha}
\newcommand\be{\beta}
\newcommand\Si{\Sigma}
\newcommand\De{\Delta}
\newcommand\cM{\mathcal  M}
\newcommand\cL{\mathcal  L}
\newcommand\BC{ {\mathbb C}}
\newcommand\BZ{{\mathbb  Z}}
\newcommand\BR{ {\mathbb  R}}
\newcommand\BP{ {\mathbb  P}}
\newcommand\bfu{\mbox {\bf  u}}
\newcommand\bfz{\mbox {\bf  z}}
\newcommand\rank{\rm{rank}\/}
\newcommand\rdeg{{\rm{rdeg}\/}}
\newcommand\pdeg{{\rm{pdeg}\/}}
\newcommand\inv{^{-1}}
\def\mapdown#1{\Big\downarrow\rlap{$\vcenter{\hbox{$#1$}}$}}
\def\inv{^{-1}}
\newtheorem{theorem}{Theorem}
\theoremstyle{definition}
\newtheorem{remark}[theorem]{Remark}
\newtheorem{definition}[theorem]{Definition}
\begin{document}
\title[ Topology of  strongly polar weighted homogeneous links
]
{Topology of  strongly polar weighted homogeneous links
}

\author{Vincent Blanl{\oe}il and 
Mutsuo Oka }

\address[V. Blanl{\oe}il]{{IRMA, UFR Math\'ematiques et Informatique\\
Universit\'e de Strasbourg\\
7, rue Ren\'e Descartes\\
F-67084 STRASBOURG}}

\email{v.blanloeil@math.unistra.fr}

\address[M. OKA]{{Department of Mathematics\\
Tokyo  University of Science\\
26 Wakamiya-cho, Shinjuku-ku\\
Tokyo 162-8601}}

\email{\hbox { oka@rs.kagu.tus.ac.jp}}

\keywords {}

\begin{abstract}
We consider a canonical $S^1$ action on $S^3$ which is defined by
$(\rho,(z_1,z_2))\mapsto (z_1\rho^p,z_2\rho^q)$ for $\rho\in S^1$ and
 $(z_1,z_2)\in S^3\subset \mathbb C^2$. We consider a link consisting of
 finite orbits of this action, where some of the orbits are reversely oriented.
Such a link appears as a link of a certain type of  mixed polynomials.
We study the space of such links and show smooth degeneration relations.
\end{abstract}
\maketitle

\maketitle

\section{Introduction}
We consider a mixed polynomial
$f(\bfz,\bar \bfz)=\sum_{\nu,\mu}c_{\nu,\mu}\bfz^\nu\bar\bfz^\mu$
where $\bfz=(z_1,\dots,z_n)$, $\bar \bfz=(\bar z_1,\dots,\bar z_n)$,
$ \bfz^\nu=z_1^{\nu_1}\cdots z_n^{\nu_n}$ for $ \nu=(\nu_1,\dots,\nu_n)$
(respectively
$\bar\bfz^\mu=\bar z_1^{\mu_1}\cdots\bar z_n^{\mu_n}$  for 
$ \mu=(\mu_1,\dots,\mu_n)$).

\begin{definition}
We say $f(\bfz,\bar\bfz)$ is {\em a  mixed weighted homogeneous
polynomial of 
radial weight type  $(q_1, \dots, q_n ; d_r) $ and of polar weight type
$(p_1,\dots, p_n;d_p)$} if
\[
\sum_{j=1}^n q_j( \nu_j+\mu_j)=d_r,\quad
\sum_{j=1}^n p_j (\nu_j-\mu_j)=d_p,\quad \text{if}\,\, c_{\nu,\mu}\ne 0.
\]
\end{definition}
Let $f$ be a  mixed weighted homogeneous
polynomial. Using a polar coordinate
$(r,\eta)$ of $\BC^*$ where $r>0$ and $\eta\in S^1$ with
$S^1=\{\eta\in \BC\,|\, |\eta|=1\}$, we define {\em a polar $\BC^*$-action} on
$\BC^n$ by
\begin{eqnarray*}
& (r,\eta)\circ \bfz=(r^{q_1}\eta^{p_1}z_1,\dots, 
r^{q_n}\eta^{p_n}z_n),\quad
(r,\eta)\in \BR^+\times S^1\\
&(r,\eta)\circ\bar \bfz=\overline{(r,\eta)\circ \bfz}=(r^{q_1}\eta^{-p_1}\bar z_1,\dots, 
r^{q_n}\eta^{-p_n}\bar z_n).
\end{eqnarray*}
More precisely, it is a $\mathbb R_+\times S^1$-action.
Then 
$f$ satisfies the
functional equality
\begin{eqnarray}\label{polar-weight}
f((r,\eta)\circ (\bfz,\bar \bfz) )=r^{d_r}\eta^{d_p}f(\bfz,\bar\bfz).\end{eqnarray}
This notion was introduced by  Ruas-Seade-Verjovsky \cite{R-S-V}
and  Cisneros-Molina \cite{Molina}.

A mixed polynomial
$f(\bfz,\bar \bfz)$ is called {\em strongly polar weighted homogeneous}
if  the polar weight and the radial weight coincide, i.e.,  $p_j=q_j,\,1\le j\le n$.

In this case, the $\mathbb C^*$ action is simply defined by
\[
 \zeta\circ \bfz=(z_1\zeta^{p_1},\dots, z_n\zeta^{p_n}),\quad \zeta\in
 \mathbb C^*.
\]
In this paper, we study the geometry of the links defined by strongly
polar weighted homogeneous
mixed polynomials.

\section{Cobordism of links}

First of all we have to point out that the topology of mixed links
is very particular and we recall some classical results and definitions 
in the case of knots and algebraic links. 

Let $K$ be a closed $2k-1$-dimensional manifold embedded
in the $(2k+1)$-dimensional sphere $S^{2k+1}$.
We suppose that $K$ is 
 $(k-2)$-connected if $\geq 2$.
When $K$ is orientable, we further assume that 
it is oriented. Then we call $K$ or its (oriented) isotopy class
an \emph{$2k-1$-knot}

\begin{definition}\label{dfn:cob}
Two $2k-1$-knots $K_0$ and $K_1$ in $S^{2k+1}$ are 
said to be \emph{cobordant}\index{knot!cobordant} if there exists a 
properly embedded\footnote{Recall that a manifold with boundary
$Y$ embedded in a manifold $X$ with boundary
is said to be \emph{properly embedded} if
$\partial Y = \partial X \cap Y$ and $Y$
is transverse to $\partial X$.}
$(2k)$-dimensional
manifold $X$ of \linebreak $S^{2k+1} \times [0,1]$ such that
\begin{enumerate}
\item $X$ is diffeomorphic to $K_0 \times [0,1]$, and
\item $\partial X = (K_0 \times \{0\}) \cup (K_1 \times \{1\}).$
\end{enumerate}
The manifold $X$ is called a \emph{cobordism}\index{cobordism}
between $K_0$ and $K_1$.
When the knots are oriented, we say that 
$K_0$ and $K_1$ are \emph{oriented cobordant} 
(or simply \emph{cobordant}) if 
there exists an oriented cobordism $X$
between them such that $$\partial X = 
(-K_0 \times \{0\}) \cup (K_1 \times \{1\}),$$
where $-K_0$ is obtained from $K_0$ by reversing
the orientation.
\end{definition}

\begin{figure}[tb]
\centering
 \begin{picture}(100,120)(0,0)
 \thicklines
 \qbezier(10,90)(30,50)(15,10)
 \put(21,57){\circle*{3}}
  \put(22,40){\circle*{3}}
 \put(8,48){\makebox{\small$K_{0}$}}
  \put(-5,100){\makebox{\small $ S^{n+2}\times\{0\}$}}
\thinlines
 \qbezier[30](15,90)(35,50)(20,10)
 \qbezier[30](20,90)(40,50)(25,10)
 \qbezier[30](25,90)(45,50)(30,10)
 \qbezier[30](30,90)(50,50)(35,10)
 \qbezier[30](35,90)(55,50)(40,10)
 \qbezier[30](40,90)(60,50)(45,10)
 \qbezier[30](45,90)(65,50)(50,10)
 \qbezier[30](50,90)(70,50)(55,10)
 \qbezier[30](55,90)(75,50)(60,10)
 \qbezier[30](60,90)(80,50)(65,10)
 \qbezier[30](65,90)(85,50)(70,10)
 \qbezier[30](70,90)(90,50)(75,10)
 \qbezier[30](75,90)(95,50)(80,10)
 \qbezier[30](80,90)(100,50)(85,10)
 \qbezier[30](85,90)(105,50)(90,10)
\color{red} \qbezier(21,57)(95,35)(65,55)
\qbezier(65,55)(35,75)(101,57)

 \qbezier(22,40)(35,20)(102,40)\color{black}
\thicklines
 \qbezier(90,90)(110,50)(95,10)
 \put(101,57){\circle*{3}}
 \put(102,40){\circle*{3}}
 \put(104,48){\makebox{\small$K_{1}$}}
  \put(85,100){\makebox{\small $ S^{n+2}\times\{1\}$}}
   \put(35,0){\makebox{\small $ S^{n+2}\times[0, 1]$}}
 \end{picture}
\caption{A cobordism between $K_0$ and $K_1$}
\label{fig2cob}
\end{figure}
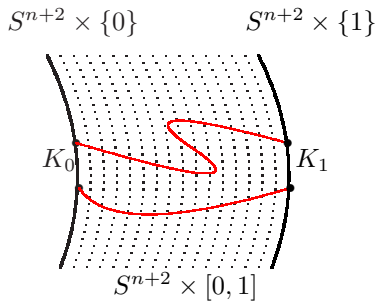

It is clear that isotopic knots are always cobordant. However,
the converse is not true in general (see Fig.~\ref{figcobiso}).

For a classification of high dimensional knots up to cobordism we refer to \cite{VB}.

\begin{figure}[tb]
\centering
 \begin{picture}(100,100)(0,0)
 \put(10,50){\circle*{3}}
 \put(9,53){\makebox{$K_{0}$}}
 \qbezier(10,50)(20,45)(29,49)
 \qbezier(32,50 )(90,80)(78,53)
 \qbezier(77,50)(52,0)(30,50)
 \qbezier(30,50)(15,90)(54,62)
\qbezier(56,60)(75,50)(90,50)
 \put(90,50){\circle*{3}}
 \put(89,53){\makebox{$K_{1}$}}
 \end{picture}
\caption{A cobordism which is not an isotopy}
\label{figcobiso}
\end{figure}
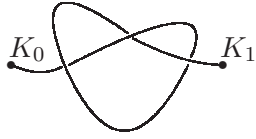

Let us study one example of dimensional one links. We denote by $T^{+}$ and $T^{-}$ respectively the one dimensional right and
the left trefoil knots (which are both mixed links). We know that $T^{+}$ and $T^{-}$ are cobordant, see
\cite{DR} p. 219 ; but let us give here the idea of the proof.

Precisely, 
we denote by $S_{+}^{3}$ (resp. $S_{-}^{3}$) the upper (resp. lower) hemisphere  of the unit $3$-sphere $\partial D^{4} = S^{3}\hookrightarrow \mathbf{R}^{4}$. 
Set $\mathcal{E}$ be the equatorial hyperplane  of $D^{4}$, and 
let $\pi: \mathbf{R}^{4} \rightarrow \mathcal{E}$ the orthogonal projection onto $\mathcal{E}$.

One can suppose that $T^{+}$ and $T^{-}$, which is the mirror image of $T^{+}$, are embedded in $S_{+}^{3}$ and $S_{-}^{3}$ respectively such that\footnote{The sign is necessary to have the right orientation.}
$$T^{-} = -
\bigl(\pi(T^{+}) \times [0, 1]\bigr) \cap S_{-}^{3}.$$ 

Then we construct 
the connected sum $\mathcal{O}=T^{+}\# T^{-}$ of $T^{+}$ and $T^{-}$ in $S^{3}$; we illustrate this construction in Fig.~\ref{fig3.1}.

Set ${\tilde{T}^{+}}$ (resp. ${\tilde{T}^{-}}$) the intersection ${\tilde{T}^{+}} = \mathcal{O} \cap{S^{3}_{+}}$ 
(resp. ${\tilde{T}^{}} = \mathcal{O} \cap{S^{3}_{-}}$).  One can assume that the connected sum $\mathcal{O}$ is made in order to have $${\tilde{T}^{-}} = -
\bigl(\pi({\tilde{T}^{+}}) \times [0, 1]\bigr) \cap S_{-}^{3}.$$

Now, if we denote $${\mathcal{D}} = \bigl(\pi({\tilde{T}^{+}})\times [0,1]\bigr)\cap D^{4},$$ then 
${\mathcal{D}}$ is homeomorphic to a $2$-disk  since $\pi({\tilde{T}^{+}})$ is a $1$-disk. Moreover $\partial \mathcal{D} = \mathcal{O} = T^{+}\# T^{-}$.
Since $\mathcal{O}$ bounds a $2$-disk embedded in $D^{4}$ then $\mathcal{O }$ is null cobordant, and, $T^{+}$ and $T^{-}$ are cobordant.\footnote{Remark that 
the trefoil knot is homeomorphic to a sphere, then to prove that $T^{+}$ and $T^{-}$ are cobordant it is sufficient to prove that their connected sum bounds a disk \cite{K}.}

\begin{figure}[tb]
\centering
 \begin{picture}(120,120)(0,0)
\thinlines\color{red}
 \qbezier(16,25 )(20,28)(26,30)
 \qbezier(28,32 )(45,40)(39,26.5)
 \qbezier(39,26.5)(26,0)(15,23)
\qbezier(14,25)(8.5,35)(12,35)
\qbezier(16,35)(22,34)(27,31)
\put(30,76.5){\vector(1,0){0}} \put (50, 77){$T^{+}$}

\qbezier(12,35)(10,39)(10,42)
\qbezier(16,35)(14,39)(14,41.5)

\qbezier(40,25)(47.5,21)(45,15)
\qbezier(37.5,26)(31,30)(27,31)
\qbezier(45,15)(44,7.5)(25,7.5)
\qbezier(25,7.5)(-5,8.5)(16,25)

\qbezier(16,58)(45,43)(39,55.5)
 \qbezier(38,58)(26,83)(15,58)
     \qbezier(15,58)(8.5,48)(12,48)
\qbezier(16,48)(22,49)(26.5,52)
\put(24,7.5){\vector(-1,0){0}} \put (50, 8){$T^{-}$}
\qbezier(12,48)(10,44)(10,42)
\qbezier(16,48)(14,44)(14,41.5)

\qbezier(29,53.5)(47.5,58)(45,68)
\qbezier(45,68)(44,76.5)(25,76.5)
\qbezier(25,76.5)(-5,74.5)(14.5,59)

\thicklines
\color{blue}\put(114, 37){$\mathcal{E}$}
\qbezier(-10,50)(-10,40)(50,40)
\qbezier(50,40)(110,40)(110,50)

\qbezier[40](-10,50)(-10,63)(50,63)
\qbezier[40](50,63)(110,63)(110,50)
\color{black}
\qbezier(-10,50)(-10,110)(50,110)
\qbezier(50,110)(110,110)(110,50)
\qbezier(110,50)(110,-10)(50,-10)
\qbezier(50,-10)(-10,-10)(-10,50)

\put(115,80){$S^3$}

 \end{picture}
\caption{The connected sum of the trefoil knot and its inverse in $S^{3}$ }
\label{fig3.1}
\end{figure}
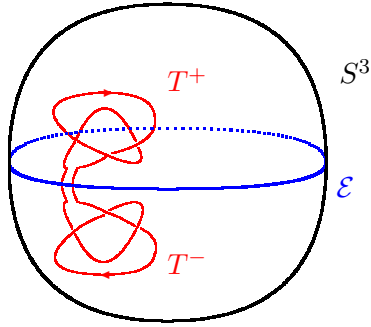

\medskip

In \cite{Le}~D.~T.~L\^e proved that the Alexander
polynomial determines the topological type of the link of an isolated singularity
of a complex analytic curve and moreover he proved that cobordant links are isotopic
since the product of their Alexander polynomials is a square.

In the case of mixed links things are different. For example the two trefoil knots $T^{+}$ 
and $T^{-}$ are cobordant but not isotopic mixed links. Recall that they are not
isotopic since they have distincts Jones polynomials.

\begin{remark}
Moreover, since the trivial knot $\mathcal{O}$ is a mixed link, then the connected sum of mixed one dimensional links can be a mixed link
contrary to the classical case as proved by N.~A'Campo~\cite{ACampo}.
\end{remark}

 \section{Strongly polar weighted homogeneous links}
 Hereafter we consider strongly polar weighted homogeneous polynomial
 $f(\bfz,\bar\bfz)$ of two variables i.e., $n=2$
with weight vector $P={}^t(p,q)$. Here we assume that $\gcd(p,q)=1$.
We assume that $f$ is convenient and non-degenerate so that 
 the link $L=f\inv(0)\cap S^3$ is smooth.
 Let $\cM(P;d_p)$ be the space of strongly polar weighted homogeneous mixed
 polynomials of  the polar degree $d_p$,
 which is non-degenerate convenient and let $\cL(P;d_p)$
 be the associated oriented links. Hereafter we denote simply $\cM, \cL$ for  $\cM(P;d_p)$ 
and $\cL(P;d_p)$ respectively.
We have a canonical mapping
$\pi:\cM\to \cL$ defined by $\pi(f)$ the link defined by 
$f\inv(0)\cap S^3$. A difficulty in the mixed polynomial situation is
 that for a fixed link, there  exist an infinitely many   mixed polynomials which define
 the link.

 Let $d_r,\,d_p$ be the radial and polar degrees respectively.
As $f$ is assumed to be convenient,
 $f$ contains  monomials
 $z_1^{a_1}\bar z_1^{b_1}$ and $z_2^{a_2}\bar z_2^{b_2}$ such that 
 \[
 p(a_1+b_1)=q(a_2+b_2)=d_r,\, \quad p(a_1-b_1)=q(a_2-b_2)=d_p.
 \]
 Therefore
 \[
 \frac{p}{q}=\frac{a_2}{a_1}=\frac{b_2}{b_1} 
 \]
and we see that $p|a_2,b_2$ and $q|a_1,b_1$ and thus $pq|d_r,d_p$.
 As our link is $S^1$ invariant,
its component is a finite union of orbits of the action.
 Recall that 
 the associated $S^1$-action is defined by 
 \[
 S^1\times S^3\to S^3,\, (\rho,(z_1,z_2))\mapsto (z_1\rho^p,z_2\rho^q),\, \rho\in S^1
 \]
 Let $P=(p,q)$ be the primitive weight vector of $f$. $P$ is fixed throughout this paper.
Note that  $L$ is stable under the action, by the Euler equiality
\[
 f(\rho\circ \bfz)=\rho^{d_p}f(\bfz).
\]
 Two orbit $z_1=0$ and $z_2=0$ are singular but by the covenience
 assumption, our link has only regular orbits.
\subsection{Coordinates of the orbits}
Take a regular orbit $L$. We can take a point
$X=(\be_1,\be_2)\in L\subset S^3\subset \BC^2$ such that $\be_1$ is a positive  number. $\be_1$ and $|\be_2|$
are unique by $L$ but $\be_2$ is not unique. The umbiguity is the action 
of $\BZ/p\BZ$. Thus $|\be_2|=\sqrt{1-\be_1^2}$ and the argument of 
$\be_2$ is unique mudulo $2\pi/p$.
Thus the space of the regular orbits is isomorphic to the 
punctured disk $\Delta^*:=\{\xi=r\rho\in \BC\,|\, 0<r<1,\,\rho\in S^1\}$,
by the correspondence $\beta_2\mapsto \beta_2^p\in \De^*$. 
For
$u=r^p\,e^{i\theta}\in \Delta_p^*$, we associate the regular orbit
\begin{eqnarray}
K(u):=\{ (\rho^p\sqrt{1-r^2},\rho^q re^{i\theta/p})\,|\,\rho\in S^1\},\quad u=re^{i\theta}\in \Delta^{*}.
\end{eqnarray}
Consider a strongly polar weighted homogeneous polynomial for arbitrary
non-negative integer $k$:
\begin{eqnarray}
&\begin{cases}
\ell_{u,k}(\bfz)&:=z_1^{q+kq}\bar z_1^{kq} - \al_{u,k} z_2^{p+kp}\bar z_2^{kp}=z_1^{q}\|z_1^{q}\|^{2k} - \al_{u,k} z_2^{p}\| z_2^{p}\|^{2k}\\
{\bar\ell}_{u,k}(\bfz)&:=\bar z_1^{q}\|z_1^{q}\|^{2k} - \overline{\al}_{u,k} \bar z_2^{p}\| z_2^{p}\|^{2k}\quad\text{where}
\end{cases}\,\\
&\al_{u,k}=\frac{(1-r^2)^{q(1/2+k)}} {r^{p(1+2k)} e^{i\theta}}.
\end{eqnarray}
Note that the polar degrees of $\al_{u,k}$  are $pq$  but the radial
degrees
are different and they are given as

\[
 \rdeg\,\ell_{u,k}=(2k+1)pq.
\]
\begin{Observation}
The polynomials $\ell_{u,k}$ define $K(u)$
and $\bar \ell_{u,k}$ defines $K(u)$ with  reversed orientation for any $k=0,1,\dots$
\end{Observation}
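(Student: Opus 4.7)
The plan is to verify the observation as an equality of sets, then as an equality of oriented links via the convention that a mixed polynomial link is co-oriented by the standard orientation of the target $\BC$. For the inclusion $K(u)\subset\ell_{u,k}^{-1}(0)$, I would substitute $(\rho^p\sqrt{1-r^2},\rho^q re^{i\theta/p})$ and compute
\[
z_1^q\|z_1^q\|^{2k}=\rho^{pq}(1-r^2)^{q(1/2+k)},\qquad z_2^p\|z_2^p\|^{2k}=\rho^{pq}r^{p(1+2k)}e^{i\theta};
\]
the defining expression for $\al_{u,k}$ then produces exact cancellation between the two monomials.

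For the reverse inclusion, since $\ell_{u,k}$ is strongly polar weighted homogeneous of polar degree $pq$, $\ell_{u,k}^{-1}(0)\cap S^3$ is $S^1$-invariant and hence a union of orbits. Evaluating on a regular orbit $K(v)$ with $v=se^{i\phi}$, $0<s<1$, vanishing reduces (after dividing out $\rho^{pq}$) to
\[
(1-s^2)^{q(1/2+k)}\,r^{p(1+2k)}\,e^{i\theta}=(1-r^2)^{q(1/2+k)}\,s^{p(1+2k)}\,e^{i\phi}.
\]
Comparing arguments forces $\phi\equiv\theta\pmod{2\pi}$, and comparing moduli gives $h(s)=h(r)$ for the strictly decreasing function $h(t):=(1-t^2)^{q(1/2+k)}/t^{p(1+2k)}$ on $(0,1)$, so $s=r$ and $v=u$. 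The singular orbits $\{z_1=0\}$ and $\{z_2=0\}$ are excluded because $\ell_{u,k}$ restricts to a nonvanishing monomial on each of $S^3\cap\{z_i=0\}$.

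For $\bar\ell_{u,k}$, the identity $\bar\ell_{u,k}(\bfz)=\overline{\ell_{u,k}(\bfz)}$ immediately implies that the zero loci coincide, so $\bar\ell_{u,k}^{-1}(0)\cap S^3=K(u)$ set-theoretically. The link of a mixed polynomial $f$ is co-oriented by $(\Re f,\Im f)$ using the standard orientation on $\BC$; replacing $f$ by $\bar f$ sends $\Im f$ to $-\Im f$ and reverses this coorientation, so the induced orientation on $K(u)$ flips. Equivalently, the equality $\bar\ell_{u,k}((r,\eta)\circ\bfz)=r^{(2k+1)pq}\eta^{-pq}\bar\ell_{u,k}(\bfz)$ shows that the polar weight of $\bar\ell_{u,k}$ is $-pq$, so the $S^1$-action parametrizes each orbit in the opposite sense relative to the coorientation.

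The set-theoretic identification is routine once the monotonicity of $h$ is in hand, and that is the only place where the specific exponents $q(1/2+k)$ and $p(1+2k)$ enter nontrivially. I expect the main subtlety to be the orientation reversal, which is not a computation but an invocation of the coorientation convention for mixed polynomial links; once that convention is fixed, the flip of orientation under conjugation of $f$ (or, equivalently, under the sign change of the polar weight) is immediate.
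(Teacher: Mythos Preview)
Your argument is correct. The paper states this Observation without proof, so there is nothing to compare against; your verification by direct substitution for the inclusion $K(u)\subset\ell_{u,k}^{-1}(0)$, the monotonicity of $h(t)=(1-t^2)^{q(1/2+k)}/t^{p(1+2k)}$ for uniqueness, and the identity $\bar\ell_{u,k}=\overline{\ell_{u,k}}$ together with the standard coorientation convention for the orientation reversal are all sound and constitute a complete justification of what the authors leave implicit.
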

Hereafter we simply use the notation:
\[
 \ell_u(\bfz):=\ell_{u,0}(\bfz)=z_1^q-\frac{(1-r^2)^{q/2}}{r^p e^{i\theta}}z_2^p,\,\,u=r^p\,e^{i\theta}.
\]
Let $\cL(P;dpq,r)$ be the subspace of $L(P;dpq)$ which has $d+2r$ components
where $r$ components are negatively oriented.
First we prepare the next lemma:
\begin{Lemma}
The moduli space $\cL(P;dpq,r)$ is connected and therefore any two links
 of this moduli has the same topology.
\end{Lemma}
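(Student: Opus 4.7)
My plan is to identify $\cL(P;dpq,r)$ with a configuration space on the punctured disk $\De^*$ and invoke the connectedness of the latter. By the preceding Observation, regular orbits of the $S^1$-action on $S^3$ correspond bijectively to points of $\De^*$ via $u\mapsto K(u)$; moreover $\ell_u$ defines $K(u)$ with its canonical orientation and $\bar{\ell}_u$ with the reversed orientation. A link $L\in\cL(P;dpq,r)$, consisting of $d+r$ positively oriented and $r$ negatively oriented regular orbits, therefore corresponds uniquely to a pair $(U,V)$ of disjoint finite subsets of $\De^*$ with $|U|=d+r$ and $|V|=r$. Conversely, given such a pair, I would realize the link by the product
\[
f_{U,V}(\bfz,\bar\bfz):=\prod_{u\in U}\ell_u(\bfz)\,\cdot\,\prod_{v\in V}\bar{\ell}_v(\bfz),
\]
which is strongly polar weighted homogeneous of polar degree $(d+r)pq-r\,pq=dpq$, is convenient, and, because each factor is non-vanishing away from its own orbit, realizes $L$ with the prescribed orientations. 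Thus $\pi(f_{U,V})=L$, and the assignment $(U,V)\mapsto L$ is a bijection from the set of admissible configurations to $\cL(P;dpq,r)$.

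Next, I would observe that the parameter space
\[
C:=\{(U,V)\,:\,U,V\subset\De^*,\;|U|=d+r,\;|V|=r,\;U\cap V=\emptyset\}
\]
is path-connected. It is the quotient of the ordered configuration space $\mathrm{Conf}_{d+2r}(\De^*)$ by $\mathfrak{S}_{d+r}\times\mathfrak{S}_r$, and since $\De^*$ is a connected $2$-manifold any two collision-free $(d+2r)$-tuples of distinct points in $\De^*$ are joined by a smooth collision-free isotopy; path-connectedness descends to the quotient. The assignment $(U,V)\mapsto f_{U,V}$ is continuous from $C$ into $\cM$, as the coefficients of $f_{U,V}$ are elementary symmetric functions of the $\al_u$ and $\overline{\al}_v$. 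Composing with $\pi$ then gives a continuous surjection from the connected space $C$ onto $\cL(P;dpq,r)$, so the latter is connected. In particular any two of its links are joined by a continuous one-parameter family of links in $S^3$, hence are isotopic and share the same topological type.

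The main technical obstacle I anticipate is the verification of the Newton non-degeneracy of $f_{U,V}$ uniformly over $(U,V)\in C$, which is needed to ensure $f_{U,V}\in\cM$. One has to examine the face restrictions of the Newton polyhedron of $f_{U,V}$ and show that on each face the mixed-polynomial critical set on the torus is empty. The key input is the pairwise distinctness of the points of $U\cup V$ in $\De^*$, which makes the binomial factors $\ell_u,\bar{\ell}_v$ coprime on $(\BC^*)^2$ and prevents any coincident contribution of two distinct orbits to the same face. Once this is handled, the connectedness conclusion is immediate from the connectedness of $C$.
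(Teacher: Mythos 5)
Your proof is correct and follows essentially the same route as the paper: both parametrize links in $\cL(P;dpq,r)$ by configurations of $d+2r$ distinct points of the punctured disk $\De^*$ (the paper uses the ordered configuration space $(\De^*)^{d+2r}\setminus\Xi$, you use the unordered pair $(U,V)$) and conclude from the connectedness of this configuration space. The explicit realization by the product $f_{U,V}$ and the non-degeneracy discussion are extra detail the paper omits, but they do not change the argument.
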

\begin{proof} Note that $\cL(P;dpq,r)$ are 
parametrized by 
$$M_{d,r}:=(\Delta^*)^{d+2r}\setminus \Xi$$
where $\Xi=\{\bfu=(u_1,\dots, u_{d+2r})\in {\Delta^*}^{(d+2r)}\,|\, u_i=
 u_j\,(\exists i,j,\,i\ne j) \}$.
 Thus it is easy to see that $M_{d+2r}$ is connected.
$\bfu$ corresponds to the link $\cup_{i=1}^{d+2r}K(u_i)$ where 
$K(u_j)$ are reversely oriented for $j=d+r+1,\dots, d+2r$.
\end{proof}
\subsection{Typical degeneration}
We consider an important degeneration of links
$L(t),\,t\in \BC$ which is defined by the family of strongly polar weighted homogeneous polynomials:
\[
f(\bfz,\bar\bfz,t)=-2 z_2^{2p}{\bar z_2}^p+z_1^{2q}{\bar z_1}^q+t z_2^{2p}{\bar z_1^q}.
\]
Using Proposition 1 (\cite{OkaPolar}), we see that 
the degeneration locus is given as the following real semi-algebraic variety
\[
 \Si:=\{t\in \mathbb C\,|\, t= \frac{2 s-1}{s^2},\,\exists s\in S^1\}
\]
Figure 1  shows the graph
of $\Si$. Let $\Omega$ be the bounded region surrounded by $\Si$.
By
Example 59 in \cite{OkaMix}, we can see the following.
{\begin{Proposition}
For any $t\in \Omega$,
 $L(t)$ has  one link component,
  while for $t\in \mathbb C\setminus\bar\Omega$(= the outside of $\Si$),
$L(t)$ has three
components. 
\end{Proposition}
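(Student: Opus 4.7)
The plan is to identify components of $L(t)$ with preimages of a real-smooth map $\Phi:\BC^{*}\to\BC$ whose critical-value locus is $\Si$, and then to count preimages over each component of $\BC\setminus\Si$ by sampling. The key observation is that $W:=z_2^{p}/z_1^{q}$ is invariant under the $\BC^{*}$-action, since $(\zeta^{p}z_1,\zeta^{q}z_2)$ sends $W$ to $\zeta^{pq}z_2^{p}/\zeta^{pq}z_1^{q}=W$; thus $W$ is constant on every regular orbit and $u\mapsto W$ is a diffeomorphism $\Delta^{*}\to\BC^{*}$. Dividing $f(\bfz,\bar\bfz,t)$ by $z_1^{2q}\bar z_1^{q}$ (nonzero on regular orbits) yields
\[
\frac{f(\bfz,\bar\bfz,t)}{z_1^{2q}\bar z_1^{q}}=1+tW^{2}-2|W|^{2}W,
\]
so $K(u)\subset L(t)$ if and only if the corresponding $W\in\BC^{*}$ satisfies $t=\Phi(W):=2\bar W-W^{-2}$. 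Therefore the number of components of $L(t)$ equals $|\Phi\inv(t)|$.

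Next I would analyze $\Phi$ globally. A short Wirtinger computation gives the real Jacobian determinant
\[
J_\Phi(W)=\left|\partial\Phi/\partial W\right|^{2}-\left|\partial\Phi/\partial\bar W\right|^{2}=4(|W|^{-6}-1),
\]
which is positive on $\{|W|<1\}$, negative on $\{|W|>1\}$ and zero on $S^{1}$. Hence $\Phi$ is a local diffeomorphism on $\BC^{*}\setminus S^{1}$, and its critical-value set is $\Phi(S^{1})=\{(2s-1)/s^{2}\mid s\in S^{1}\}=\Si$, in agreement with the degeneration locus produced by Proposition~1 of \cite{OkaPolar}. Moreover $\Phi$ is proper on $\BC^{*}$: $|\Phi(W)|\to\infty$ both as $|W|\to 0$ (via $W^{-2}$) and as $|W|\to\infty$ (via $2\bar W$). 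Properness together with the local-diffeomorphism property off $S^{1}$ implies that $|\Phi\inv(t)|$ is a finite, locally constant function of $t$ on $\BC\setminus\Si$.

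It remains to evaluate this locally constant function on each component of $\BC\setminus\Si$. I would first verify that the parametrization $s\mapsto(2s-1)/s^{2}$ is injective on $S^{1}$: equality of images at distinct $s_1,s_2$ reduces algebraically to $2s_1s_2=s_1+s_2$, which on writing $s_j=e^{i\alpha_j}$ becomes $e^{i(\alpha_1+\alpha_2)/2}=\cos((\alpha_1-\alpha_2)/2)$; since the left side has modulus $1$ and the right is real, $\alpha_1=\alpha_2$. Thus $\Si$ is a simple closed curve (with a cusp at $t=1$) and $\BC\setminus\Si$ has exactly two components, $\Omega$ and $\BC\setminus\bar\Omega$. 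At the interior sample point $t=0\in\Omega$, $\Phi(W)=0$ forces $|W|^{2}W=1/2$, whose unique solution is $W=2^{-1/3}$, so $L(t)$ has one component throughout $\Omega$. At an exterior sample point, say $t=2$, I would exhibit three preimages: the unique real positive root of the cubic $2W^{3}-tW^{2}-1=0$ together with the complex conjugate pair $W=\tfrac14\pm i\tfrac{\sqrt 7}{4}$ (a direct check gives $\Phi(\tfrac14+i\tfrac{\sqrt 7}{4})=2$); hence $L(t)$ has three components throughout $\BC\setminus\bar\Omega$. The main obstacle is the verification that $\Si$ is a simple closed curve so that $\BC\setminus\Si$ has only two components; once this and the properness of $\Phi$ are in hand, the counting argument is immediate.
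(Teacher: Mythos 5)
Your reduction is exactly the paper's: you pass to the orbit space using the invariant coordinate $W=z_2^{p}/z_1^{q}$ (the paper's coordinate $u$ on $\BP^1(P)\cap\{z_1z_2\ne 0\}$) and arrive at the same equation $1+tW^{2}-2|W|^{2}W=0$. The difference is what happens next: the paper counts the solutions by citing Example 59 of \cite{OkaMix} (and \cite{MixIntersection}), whereas you prove the count directly, via the Wirtinger computation showing the critical values of $\Phi(W)=2\bar W-W^{-2}$ are exactly $\Si$, properness of $\Phi$, injectivity of $s\mapsto(2s-1)/s^{2}$ on $S^{1}$ (so $\Si$ is a Jordan curve and $\BC\setminus\Si$ has two components), and evaluation at one sample point per component. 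This self-contained route is correct in outline, and the individual computations you give (Jacobian $4(|W|^{-6}-1)$, $\Phi(S^{1})=\Si$, the injectivity reduction $2s_1s_2=s_1+s_2$, $\Phi(\tfrac14\pm i\tfrac{\sqrt7}{4})=2$) all check out; what it buys is independence from the cited example, at the cost of more elementary bookkeeping.

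Three small points must be tightened for the argument to be complete. First and most importantly, at $t=2$ you only \emph{exhibit} three preimages, which gives at least three components, while the proposition asserts exactly three; you must show the list is exhaustive. This is easy in your framework: writing $W=x+iy$, the imaginary part of $1+2W^{2}-2|W|^{2}W=0$ is $2y\bigl(2x-(x^{2}+y^{2})\bigr)$, so either $y=0$ (the cubic $2x^{3}-2x^{2}-1=0$, which indeed has a unique real root since its local maximum value at $x=0$ is $-1$) or $x^{2}+y^{2}=2x$, and substituting the latter into the real part gives $1-4x=0$, hence exactly the pair $\tfrac14\pm i\tfrac{\sqrt7}{4}$. (At $t=0$ your count is already complete: $|W|^{2}W=\tfrac12$ forces $|W|=2^{-1/3}$ and $W$ real positive.) Second, you should justify that the sample points lie where you claim: $\Im\bigl((2s-1)/s^{2}\bigr)=-2\sin\al(1-\cos\al)$ vanishes only at $s=\pm1$, so $\Si$ meets the real axis only at $1$ and $-3$, and $\Re\bigl((2s-1)/s^{2}\bigr)=2\cos\al-\cos2\al\le\tfrac32$, so $t=2$ is in the unbounded component, while a vertical ray from $0$ crosses $\Si$ exactly once, so $0\in\Omega$. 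Third, the identification of components of $L(t)$ with $\Phi^{-1}(t)$ uses that $L(t)$ contains neither singular orbit $\{z_1=0\}$ nor $\{z_2=0\}$; this holds because $f$ contains the monomials $z_1^{2q}\bar z_1^{q}$ and $-2z_2^{2p}\bar z_2^{p}$ (convenience, as the paper assumes), and should be said. With these routine additions your proof is complete.
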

\begin{proof}Let us consider the weighted projective space $\BP^1(P):=\BC^2\setminus\{O\}/\mathbb C^*$
by the above $\BC^*$-action. For $U:=\BP^1(P)\cap \{z_1z_2\ne 0\}$, it is easy to see that 
$u:=z_2^p/z_1^q$ is a coordinate function.
Our link corresponds to the solutions (=zero points) of 
\[
 -2 u^{2}\bar u+t u^{2}+1=0
\]
and there exists one solution  (respectively 3 solutions) for each $t\in \Omega$ (resp.
 $t\notin {\bar\Omega}$).
See Example 59, \cite{OkaMix} or \cite{MixIntersection}.
\end{proof}
We consider the point $-3\in \Si$ which is a smoot point of $\Si$.
 There are two components for $L(-3)$ ($u=1/2$ and $u=-1$) and the component passing 
through
$(1,e^{i\pi /p})$ is a doubled component. 
Here we are considering the link on the sphere of radius $\sqrt 2$,
$S_{\sqrt 2}$ for simplicity.
Let us consider the variety:
\[
\mathcal W=\{(z_1,z_2,t)\in S_{\sqrt 2}\times \mathbb R\,|\, 
-3-\eps\le t\le-3+\eps,\,
f(\bfz,\bar\bfz,t)=0\},\quad \eps\ll 1.
\]
The following is the key assertion. 
\begin{Lemma}
$\mathcal W$ is a smooth manifold 
with boundary $L(-3-\eps) \cup -L(-3+\eps)$.
\end{Lemma}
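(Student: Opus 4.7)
The plan is to verify that $f$, regarded as a smooth map $S_{\sqrt 2} \times [-3-\eps, -3+\eps] \to \BC$, has $0$ as a regular value; the identification $\partial \mathcal W = L(-3-\eps) \cup (-L(-3+\eps))$ then follows by definition, with the sign dictated by the canonical orientation of the interval.

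First I would reduce to a single-variable equation via the polar $\BC^*$-action. On the regular locus $\{z_1 z_2 \ne 0\}$ the coordinate $u = z_2^p/z_1^q$ is $\BC^*$-invariant, and a direct computation gives
\[
f(\bfz, \bar\bfz, t) = z_1^{2q}\,\bar z_1^q \cdot g(u, \bar u, t), \qquad g(u, \bar u, t) := -2 u^2 \bar u + t u^2 + 1.
\]
Thus on $S_{\sqrt 2}$ the zero set of $f$ is an $S^1$-bundle over $\mathcal V := \{(u, t) : g(u, \bar u, t) = 0,\; t \in [-3-\eps, -3+\eps]\}$, and $\mathcal W$ is smooth if and only if $\mathcal V$ is a smooth $1$-manifold with boundary.

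Next I would apply the implicit function theorem to $g$. The partial derivatives
\[
\partial_u g = 2 u (t - 2 \bar u), \qquad \partial_{\bar u} g = -2 u^2, \qquad \partial_t g = u^2
\]
give the $u$-Jacobian determinant $|\partial_u g|^2 - |\partial_{\bar u} g|^2 = 4 |u|^2 (|t - 2\bar u|^2 - |u|^2)$. This is nonzero at simple roots, so smoothness of $\mathcal V$ is immediate there (in particular along the branch passing through $u = 1/2,\, t = -3$). The only critical zero on the slice $t = -3$ is the double root $u = -1$; writing $u = x + iy$,
\[
dg = 2\, du - 2\, d\bar u + dt = 4 i\, dy + dt,
\]
whose real and imaginary components $dt$ and $4\,dy$ are linearly independent $1$-forms on $\BR^3$, so $Dg$ has rank $2$ even at this degenerate point.

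By continuity of the rank condition and compactness of the slab, for $\eps$ sufficiently small $Dg$ has rank $2$ throughout $\mathcal V$, so $\mathcal V$ is a smooth $1$-manifold with boundary, and lifting through the $S^1$-bundle yields the lemma. The main obstacle is the transversality at $(-1, -3)$: the image of $D_u g$ there collapses to the purely imaginary line $i\BR$, and the argument succeeds precisely because $\partial_t g = 1 \in \BR$ is transverse to $i\BR$. This reflects the hypothesis that $-3$ is a smooth point of $\Si$ — the parametrization $t = (2s-1)/s^2$ is a regular immersion at $s = -1$ — so the one-parameter family crosses $\Si$ transversely at $t = -3$.
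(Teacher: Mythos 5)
Your argument is correct, and it takes a genuinely different route from the paper's. The paper works upstairs on $S_{\sqrt 2}\times\BR$: writing $f=g+ih$, it shows that the three $1$-forms $dg,dh,d\rho$ (with $\rho=\|\bfz\|^2$) are independent along the degenerate link, and by $S^1$-equivariance of a strongly polar weighted homogeneous polynomial it reduces this to the single representative point $(1,e^{i\pi/p},-3)$ of the doubled orbit, where the resulting $3\times 5$ matrix visibly has rank $3$ (the essential feature being that $dg=dt$ there). You instead pass to the $S^1$-quotient via $u=z_2^p/z_1^q$ --- the same device the paper uses in its Proposition counting components --- reduce to showing that $0$ is a regular value of $-2u^2\bar u+tu^2+1$ on the slab, check rank $2$ at both zeros of the slice $t=-3$ (the simple root $u=1/2$, and the double root $u=-1$ where $\partial_t g=1$ supplies the missing rank, exactly the role played by the row $dg=dt$ in the paper), and then pull $\mathcal V$ back through the orbit map, which is a genuine $S^1$-bundle away from the axes since $\gcd(p,q)=1$ and the link avoids the axes by convenience. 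Your version is lower-dimensional, treats both components explicitly, and makes transparent why the transversal crossing of $\Si$ at the smooth point $-3$ matters; the paper's version avoids justifying the submersion/bundle structure of the quotient and stays inside its complete-intersection framework. Two small points to tighten, both at the level of what the paper itself leaves implicit: replace ``nonzero at simple roots'' by the one-line evaluation $|{-3}-2\bar u|^2-|u|^2=16-\tfrac14\ne 0$ at $u=1/2$; and note that the manifold-with-boundary conclusion (boundary exactly over $t=-3\pm\eps$, with the stated orientations) also needs $-3\pm\eps\notin\Si$, so that $\mathcal V$ meets the slices $t=-3\pm\eps$ transversally --- this is precisely the transversality to $\Si$ you invoke at the end, so it should be wired into the boundary statement rather than left as a closing remark.
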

\begin{proof}
Let $f(\bfz,\bar\bfz,t)=g(\bfz,\bar \bfz,t)+i\,h(\bfz,\bar\bfz,t)$.
We assert that $\mathcal W$ is a complete intersection variety. For this purpose, we show that 
three 1-forms $dg,dh, d\rho$ are independent on $L(-3)$,
where 
$\rho(\bfz)=\|\bfz\|^2$. 
As the polynomial $f$ is strongly polar weighted homogeneous, it is enough to check the assertion
on a point $\tilde{\bfz}_0=(1,\al,-3)\in W$ where $\al=e^{i\pi /p}$.
For the calculation's simplicity, we use the base $\{dz_1,d\bar z_1,dz_2,d\bar z_2,dt\}$ of the complexified cotangent space.
Using the equalities
$g=(f+\bar f)/2,\,h=(f-\bar f)/(2i)$, we get
\[\begin{split}
&\left(\begin{matrix}dg(\tilde{\bfz}_0)\\dh(\tilde{\bfz}_0)\\d\rho(\tilde{\bfz}_0)\end{matrix}\right)=A
\left(\begin{matrix}
dz_1\\d\bar z_1\\dz_2\\d\bar z_2\\dt
\end{matrix}
\right) \qquad 
\text{where}\\
&A= \left[ \begin {array}{ccccc} 0&0&0&0&1\\ \noalign{\medskip}-2\,iq&2\,
iq&2\,ip{\it \bar\al}&-2\,ip{\it \al}&0\\ \noalign{\medskip}1&1&{\it \bar \al}&{
\it \al}&0\end {array} \right] 
\end{split}
\]
Thus it is easy to see that $\rank\, A=3$.
\end{proof}
\vspace{1cm}
\begin{figure}
{\includegraphics[width=6cm,height=5cm]{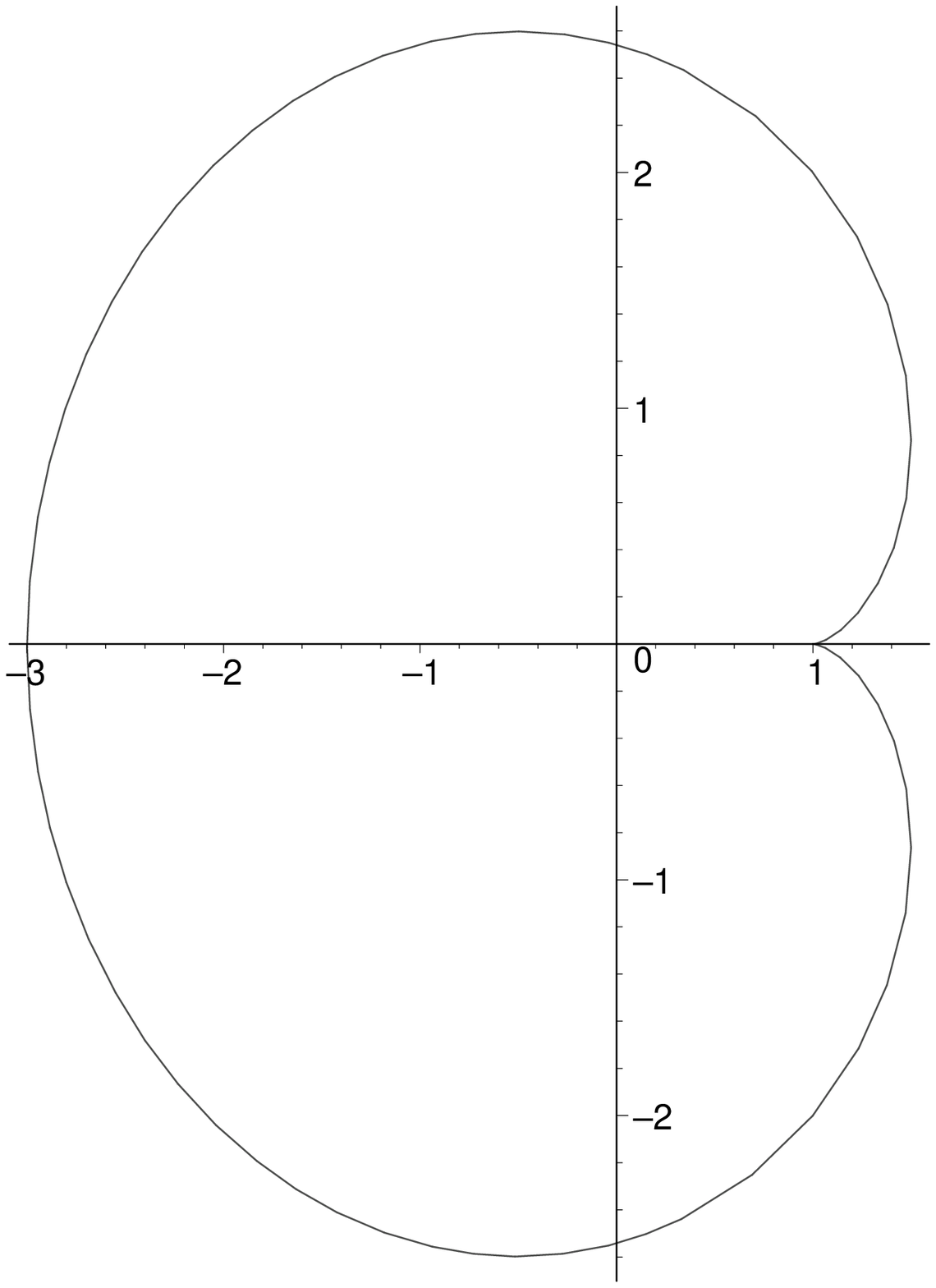}}
\caption{\label{sigma}$\Sigma$}
\end{figure}
\subsection{Milnor fibrations}
Take $\bfu=(u_1,\dots, u_{d+2r})\in M_{d,r}$ and consider 
the corresponding link $L(\bfu)=\cup_{j=1}^{d+2r}K(u_j)$ with 
$d+2r$ components and the last $r$ components are negatively oriented.
Let $f(\bfz)$ be a strongly polar weighted homogeneous  polynomial which defines
$L(\bfu)$
with $\pdeg\, f=dpq$ and $\rdeg\,f= (d+2s)pq$ with $s\ge r$.
For example, we can take
\[
 g(\bfz)=\ell_{u_1,s-r}(\bfz)\prod_{j=2}^{d+r}\ell_{u_j}(\bfz)\prod_{j=d+r+1}^{d+2r}\bar\ell_{u_j}(\bfz).
\]
Let $F$ be the Milnor fiber of $f$:
$F=\{\bfz\in S^3\,|\, f(\bfz)>0\}$.
As we assume that  $L(\bfu)$ has no singular orbit, $f(\bfz)$ is a convenient mixed
polynomial.
Thus it contains monomials
$z_1^{(d+s)q}\bar z_1^{qs}$ and $z_2^{(d+s)p}\bar z_2^{ps}$.
The monodromy $h:F\to F$ is defined by
$h(\bfz)=e^{2\pi i/dpq}\circ \bfz$ and it is the restriction of
$S^1$-action
to $\mathbb Z_{dpq}\subset S^1$.
Thus we have a commutative diagram:
\[
 \begin{matrix}
  F&\hookrightarrow& S^3\setminus L(\bfu)\\
 &\searrow\,\xi&\mapdown{\pi}\\
&& \mathbb P^1(P)\setminus W
 \end{matrix}
\]
where $W$ is $d+2r$ points corresponding to the components of $L(\bfu)$.
 $\pi,\xi$ are canonical quotient mapping by $S^1$ and
$\mathbb Z_{dpq}$ respectively.
As $F$ is a $\mathbb Z_{dpq}$ cyclic covering over $\mathbb P^1\setminus W$, with two
 singular points $(0,1)$ and $(1,0)$. Over these two points, the
 corresponding fibers are $q,p$ points respectively. Thus we have
\begin{Proposition}(cf. Theorem 65,\cite{OkaMix})
 The Euler charactersitic of $F$ is given as 
\[
 \chi(F)=-(d+2r)dpq+p+q
\]
\end{Proposition}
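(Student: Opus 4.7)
The plan is to compute $\chi(F)$ by exploiting the $\mathbb Z_{dpq}$-cyclic covering structure $\xi: F \to \mathbb P^1(P)\setminus W$ set up above, together with multiplicativity of the Euler characteristic for finite coverings and additivity over a suitable stratification of the base.

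First I would stratify $\mathbb P^1(P)\setminus W$ into the open regular part
\[
U := \mathbb P^1(P)\setminus \bigl(W\cup\{(1,0),(0,1)\}\bigr)
\]
and the two isolated ``singular orbit'' points $(1,0)$ and $(0,1)$. Topologically $\mathbb P^1(P)$ is the $2$-sphere with $\chi=2$; removing the $d+2r$ points of $W$ together with the two orbifold points gives
\[
\chi(U) = 2 - (d+2r) - 2 = -(d+2r).
\]

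Next I would use that over $U$ the map $\xi$ is genuinely an unramified cyclic covering of degree $dpq$ (the $S^1$-action is free on $\pi^{-1}(U)\subset S^3\setminus L(\bfu)$, and the monodromy $h$ generates $\mathbb Z_{dpq}\subset S^1$ acting freely and transitively on each fiber). Multiplicativity of $\chi$ for finite coverings then gives
\[
\chi\bigl(\xi^{-1}(U)\bigr) = dpq\cdot \chi(U) = -(d+2r)\,dpq.
\]
Over the two special points $(1,0)$ and $(0,1)$ the covering is ``ramified'' because the $S^1$-stabilizers are $\mathbb Z_p$ and $\mathbb Z_q$ respectively, and the excerpt records that the corresponding fibers of $\xi$ consist of $q$ and $p$ points. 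Additivity of the Euler characteristic over the stratification then yields
\[
\chi(F) = \chi\bigl(\xi^{-1}(U)\bigr) + \#\xi^{-1}(1,0) + \#\xi^{-1}(0,1) = -(d+2r)\,dpq + p + q,
\]
which is the desired formula.

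The main obstacle is justifying the fiber counts $p$ and $q$ over the singular orbits; the core of the argument is to intersect the $S^1$-orbit $\{(\zeta,0):\zeta\in S^1\}$ (respectively $\{(0,\zeta):\zeta\in S^1\}$) with $F$ by using only the convenience monomial $z_1^{(d+s)q}\bar z_1^{sq}$ (respectively $z_2^{(d+s)p}\bar z_2^{sp}$) that controls $f$ on the coordinate axes, and then dividing by the $\mathbb Z_{dpq}$-action, whose kernel there is $\mathbb Z_p$ (respectively $\mathbb Z_q$). Once these two local counts are established, the rest is the bookkeeping above, so the proof is short.
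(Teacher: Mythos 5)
Your overall route is the same as the paper's (the $\mathbb Z_{dpq}$-cyclic covering $\xi\colon F\to \mathbb P^1(P)\setminus W$, multiplicativity of $\chi$ over the unbranched part and additivity over the two exceptional fibers), so the only thing that needs checking is the step you yourself flag as the core: the cardinality of $\xi^{-1}(1,0)$ and $\xi^{-1}(0,1)$. That step, as you sketch it, does not give $p$ and $q$. The fiber $\xi^{-1}(1,0)$ is the full intersection $F\cap\{z_2=0\}$; there is no further ``dividing by the $\mathbb Z_{dpq}$-action'' (dividing would produce the single base point $(1,0)$, not a fiber count). Restricting $f$ to the circle $|z_1|=1$, $z_2=0$ via the convenience monomial gives $f(z_1,0)=c\,z_1^{(d+s)q}\bar z_1^{sq}=c\,z_1^{dq}$, so $F\cap\{z_2=0\}$ consists of $dq$ points; they form one $\mathbb Z_{dpq}$-orbit with isotropy $\mathbb Z_p$ (this is where your kernel $\mathbb Z_p$ correctly enters, as $dq=dpq/p$, but it measures the isotropy, not the fiber). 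Likewise $\xi^{-1}(0,1)$ has $dp$ points. Feeding the honest counts into your (otherwise correct) bookkeeping yields
\[
\chi(F)=-(d+2r)dpq+d(p+q),
\]
not $-(d+2r)dpq+p+q$.

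The discrepancy is genuine for $d>1$ and not an artifact of your stratification: for the positive link $z_1^{qd}-z_2^{pd}=0$ (so $r=0$, the case singled out in the Corollary) one has $\chi(F)=1-(qd-1)(pd-1)=-d^2pq+d(p+q)$, which matches $d(p+q)$ and contradicts $p+q$ already for $p=q=1$, $d=2$ (where $F\simeq\mathbb C^{*}$ and $\chi(F)=0$, while the stated formula gives $-2$). For $d=1$ the two expressions coincide, which is presumably why the slip is easy to miss. So as written your proposal ``proves'' the stated formula only by importing the unjustified fiber counts $q$ and $p$ from the statement's surrounding text; if you carry out your own recipe for counting, you prove instead the corrected formula with $d(p+q)$ in place of $p+q$. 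You should either record that correction explicitly or restrict the claim to $d=1$; the rest of your argument (that $\xi$ is an unramified $dpq$-fold covering over $U$, $\chi(U)=-(d+2r)$, and additivity/multiplicativity of $\chi$) is sound and is exactly the paper's intended mechanism.
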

Note that $\chi(F)$ depends on the number of components $d+2r$ but
it does  not depend
on
the radial degree $(d+2s)pq$.
Thus we see that, under  fixed polar and radial degrees,
there are $s+1$ different topologies among their Milnor fibrations.
The components types can be $d+2r,\,r=0,\dots, s$.

\section{main result}
Consider a smooth family of strongly polar  weighted homogeneous  links
$L(t)\in \mathcal L(P;dpq), 0\le t\le 1$
with weight $P={}^t(p,q)$
such that 

(1) the variety $W=\{(\bfz,t)\in S^3\times [0,1]\, \bfz\in L(t)\}$ is a smooth variety of codimension two.

(2) There exists $t_0,\,0<t_0<1$ such that 
\[
\begin{cases} L(t)\in 
 \cL(P;dpq,r-1)\quad &t< t_0\\
L(t)\in \cL(P;dpq,r)\quad &t>t_0
\end{cases}
\]
The link $L(t_0)$ is singular. One component is the limit of two
components with  opposite orientations.
We call such a family {\it a smooth elimination of a pair of links}.

\begin{Theorem}For any link $L\in \cL(P;dpq,r)$ with $r>0$,
there exists a smooth elimination family $L(t)$ of a pair of links with $L(0)=L$
and $L(1)\in \cL(P;dpq,r-1)$.
\end{Theorem}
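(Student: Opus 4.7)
\emph{Plan.} The idea is to concatenate two smooth families: first an isotopy of $L$ inside $\cL(P;dpq,r)$ bringing one positive/negative pair of components into a chosen standard position, then the typical degeneration of the previous subsection applied locally to annihilate that pair.

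Write $L=L(\bfu_0)=\bigcup_{i=1}^{d+2r}K(u_i^{(0)})$, with the last $r$ orbits reversely oriented. By the connectedness lemma, $M_{d,r}$ is path-connected, so a smooth path in $M_{d,r}$ carries $\bfu_0$ to a configuration $\bfu_*$ in which three distinguished orbits $K(u_1^*),K(u_2^*)$ (positively oriented) and $K(u_{d+2r}^*)$ (negatively oriented) coincide with the three orbits defined by the typical degeneration
\[
F(\bfz,\bar\bfz,s):=-2z_2^{2p}\bar z_2^{p}+z_1^{2q}\bar z_1^{q}+s\,z_2^{2p}\bar z_1^{q}
\]
at $s=-3-\eps$, while the remaining $u_i^*$ are placed in $\Delta^*$ far from the trajectory of $F$'s orbits for $s\in[-3-\eps,-3+\eps]$. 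This supplies the first leg $t\in[0,\tfrac12]$, staying entirely inside $\cL(P;dpq,r)$ with no collisions, so the portion of the total variety over this interval is automatically smooth of codimension two.

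For $t\in[\tfrac12,1]$, choose a smooth monotone $s(t)$ with $s(\tfrac12)=-3-\eps$, $s(1)=-3+\eps$, and $s(t_0)=-3$ transverse to $\Si$, and set
\[
f(\bfz,\bar\bfz,t):=F(\bfz,\bar\bfz,s(t))\,G(\bfz),\quad G(\bfz):=\prod_{i=3}^{d+r}\ell_{u_i^*}(\bfz)\prod_{i=d+r+1}^{d+2r-1}\bar\ell_{u_i^*}(\bfz).
\]
The polar degrees match, $\pdeg F=pq$ and $\pdeg G=(d-1)pq$, so $\pdeg f=dpq$. By the typical degeneration Proposition, the link $f\inv(0)\cap S^3$ has $d+2r$ components with $r$ reversed for $t<t_0$ and $d+2r-2$ components with $r-1$ reversed for $t>t_0$, which establishes condition (2) of the definition.

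Condition (1), the global smoothness of $\mathcal W$, follows from the disjointness arranged in the first step: since $F\inv(0)\cap G\inv(0)=\emptyset$ in $S^3$ for every $s\in[-3-\eps,-3+\eps]$, the variety $\mathcal W$ is locally a disjoint union of its $F$-piece and its $G$-piece. The $G$-piece is a product of the fixed smooth link $G\inv(0)\cap S^3$ with an interval, hence smooth of codimension two. The $F$-piece is smooth through $t=t_0$ by precisely the rank-three computation for the matrix $A$ in the key Lemma. Concatenating the two legs via a smooth partition of $[0,1]$ yields the required smooth elimination family with $L(0)=L\in\cL(P;dpq,r)$ and $L(1)\in\cL(P;dpq,r-1)$. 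The main technical obstacle is securing this disjointness robustly: one must choose $\eps$ small enough that the three moving $F$-orbits remain in a small $\Delta^*$-neighborhood of their $s=-3-\eps$ positions throughout $[-3-\eps,-3+\eps]$, and then use connectedness of $M_{d,r}$ to park the remaining $u_i^*$ outside that neighborhood.
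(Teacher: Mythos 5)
Your proposal is correct and follows essentially the route the paper intends: isotope the given link inside the connected moduli space $M_{d,r}$ so that one oppositely oriented pair (plus one positive orbit) sits at the orbits of the typical degeneration family, carry the remaining components along as a product of $\ell_{u_i}$ and $\bar\ell_{u_i}$ factors, and invoke the key Lemma (the rank computation for $A$) for smoothness of $\mathcal W$ across $t_0$. The degree bookkeeping ($\pdeg \bar\ell_{u}=-pq$, so $\pdeg f=dpq$), the component/orientation count across $\Si$ at $t=-3$, and the disjointness argument are all handled as in the paper's setup.
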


\begin{Corollary}For any link $L\in \cL(P;dpq,r)$ with $r>0$,
$r$ pairs of links with opposite orientations
can be eliminated successively to a link $L'\in \cL(P;dpq,0)$ of positive
 link.
$L'$ is isomorphic to  a holomorphic torus link defined by
\[
 z_1^{qd}-z_2^{pd}=0.
\]
\end{Corollary}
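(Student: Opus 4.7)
The plan is to prove the Corollary by induction on $r$, using the preceding Theorem as the inductive mechanism and the connectivity Lemma as the base case.

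First I would iterate the Theorem. Given $L \in \cL(P;dpq,r)$ with $r>0$, the Theorem produces a smooth elimination family $L(t)$, $0\le t\le 1$, joining $L(0)=L$ to some $L^{(1)}:=L(1)\in \cL(P;dpq,r-1)$. If $r-1>0$, apply the Theorem again to $L^{(1)}$ to obtain $L^{(2)}\in \cL(P;dpq,r-2)$, and continue. After exactly $r$ applications one reaches a link $L'\in \cL(P;dpq,0)$, so $r$ pairs of oppositely oriented components have been eliminated successively. The only thing to check at this step is that the polar degree $dpq$ is unchanged along the process; this is built into the definition of the smooth elimination family, since each $L(t)$ lies in $\cL(P;dpq,\cdot)$.

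Second I would identify $L'$ up to topological type. The Lemma shows that $\cL(P;dpq,0)$ is parametrized by the configuration space $M_{d,0}=(\Delta^*)^{d}\setminus \Xi$ and is therefore path-connected, so any two links in this moduli space have the same topology. It thus suffices to exhibit one representative. The polynomial $z_1^{qd}-z_2^{pd}$ is holomorphic, hence strongly polar weighted homogeneous with $P={}^t(p,q)$ and polar degree $dpq=dpq$ (its radial and polar weights automatically coincide and agree with the given $P$); it is convenient and non-degenerate, its $d$ components are regular orbits of the $S^1$-action, and all of them carry the positive complex orientation. Hence it is a bona fide member of $\cL(P;dpq,0)$, and by connectedness $L'$ is isomorphic to this holomorphic torus link.

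The main obstacle, namely the construction of a single smooth elimination of a pair of oppositely oriented components, is already handled by the preceding Theorem. The Corollary itself is then a routine induction together with a recognition argument: connectivity of the moduli space reduces the identification of $L'$ to exhibiting a single distinguished holomorphic representative, which is precisely the classical torus link $\{z_1^{qd}-z_2^{pd}=0\}\cap S^3$.
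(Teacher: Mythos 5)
Your proposal is correct and follows exactly the route the paper intends: iterate the Theorem $r$ times to pass from $\cL(P;dpq,r)$ down to $\cL(P;dpq,0)$, then invoke the connectivity Lemma for the moduli space $M_{d,0}$ together with the explicit holomorphic representative $z_1^{qd}-z_2^{pd}$ (a product of $d$ factors of type $\ell_u$, hence $d$ positively oriented regular orbits of polar degree $dpq$) to identify $L'$ with the torus link. No gaps; this is essentially the paper's argument.
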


\def\cprime{$'$} \def\cprime{$'$} \def\cprime{$'$} \def\cprime{$'$}
  \def\cprime{$'$}


\end{document}